
\documentclass[11pt,a4paper]{amsart}
%%%%%%%%%%%%%%%%%%%%%%%%%%%%%%%%%%%%%%%%%%%%%%%%%%%%%%%%%%%%%%%%%%%%%%%%%%%%%%%%%%%%%%%%%%%%%%%%%%%%%%%%%%%%%%%%%%%%%%%%%%%%%%%%%%%%%%%%%%%%%%%%%%%%%%%%%%%%%%%%%%%%%%%%%%%%%%%%%%%%%%%%%%%%%%%%%%%%%%%%%%%%%%%%%%%%%%%%%%%%%%%%%%%%%%%%%%%%%%%%%%%%%%%%%%%%
\usepackage{graphicx,amscd,color,amsmath,amsfonts,amssymb,geometry,hyperref,soul}

\setcounter{MaxMatrixCols}{10}
%TCIDATA{OutputFilter=LATEX.DLL}
%TCIDATA{Version=5.50.0.2953}
%TCIDATA{<META NAME="SaveForMode" CONTENT="1">}
%TCIDATA{BibliographyScheme=Manual}
%TCIDATA{LastRevised=Thursday, October 06, 2016 04:28:48}
%TCIDATA{<META NAME="GraphicsSave" CONTENT="32">}

\newtheorem{theorem}{Theorem}
\theoremstyle{plain}

\newtheorem{lemma}{Lemma}

\newtheorem{proposition}{Proposition}
\newtheorem{remark}{Remark}

\numberwithin{equation}{section}
\geometry{left=2.6cm,right=2.6cm,top=3cm,bottom=2.1cm,headheight=3mm}
\pretolerance1000

\begin{document}
\title[Optimal blow up rate for the constants of Khinchin type inequalities]{%
Optimal blow up rate for the constants of Khinchin type inequalities}
\author[D. Pellegrino, D. Santos, J. Santos]{Daniel Pellegrino, Djair
Santos, Joedson Santos}
\address[D. Pellegrino]{Departamento de Matem\'{a}tica \\
\indent Universidade Federal da Para\'{\i}ba \\
\indent 58.051-900 - Jo\~{a}o Pessoa, Brazil.}
\email{pellegrino@pq.cnpq.br}
\address[D. Santos]{Departamento de Matem\'{a}tica \\
\indent Universidade Federal da Para\'{\i}ba \\
\indent 58.051-900 - Jo\~{a}o Pessoa, Brazil.}
\email{djairpsc@hotmail.com}
\address[J. Santos]{Departamento de Matem\'{a}tica \\
\indent Universidade Federal da Para\'{\i}ba \\
\indent 58.051-900 - Jo\~{a}o Pessoa, Brazil.}
\email{joedson@mat.ufpb.br}
\thanks{The authors are supported by CNPq}
\thanks{2010 Mathematics Subject Classification: 60B11, 46B09}
\keywords{Khinchin inequality, Kahane--Salem--Zygmund inequality; Kahane
inequality}

\begin{abstract}
We provide, among other results, the optimal blow up rate of the constants
of a family of Khinchin inequalities for multiple sums.
\end{abstract}

\maketitle

\section{Introduction}

The Khinchin inequality was designed in 1923 by A. Khinchin (\cite{kh}) to
estimate the asymptotic behavior of certain random walks. The following
example provides an illustration of its reach. Suppose that you have $n$
real numbers $a_{1},...,a_{n}$ and a fair coin. When you flip the coin, if
it comes up heads, you chose $\alpha _{1}=a_{1}$, and if it comes up tails,
you choose $\alpha _{1}=-a_{1}.$ After having flipped the coin $k$ times you
have the number
\begin{equation*}
\alpha _{k+1}:=\alpha _{k}+a_{k+1},
\end{equation*}%
if it comes up heads and
\begin{equation*}
\alpha _{k+1}:=\alpha _{k}-a_{k+1},
\end{equation*}%
if it comes up tails. After completed all $n$ steps, what should be the
expected value of
\begin{equation*}
|\alpha _{n}|=\left\vert \sum_{k=1}^{n}\pm a_{k}\right\vert ?
\end{equation*}%
Khinchin's inequality, in some sense, solves this question. Nowadays it is a very important
probabilistic tool with deep inroads in Mathematical Analysis and Banach
Space Theory. It asserts that for any $p>0$ there are constants $%
A_{p},B_{p}>0$ such that
\begin{equation}
A_{p}\left( \sum\limits_{j=1}^{n}\left\vert a_{j}\right\vert ^{2}\right) ^{%
\frac{1}{2}}\leq \left( \int\limits_{0}^{1}\left\vert
\sum\limits_{j=1}^{n}r_{j}(t)a_{j}\right\vert ^{p}dt\right) ^{\frac{1}{p}%
}\leq B_{p}\left( \sum\limits_{j=1}^{n}\left\vert a_{j}\right\vert
^{2}\right) ^{\frac{1}{2}}  \label{65}
\end{equation}%
for all sequence of scalars $\left( a_{i}\right) _{i=1}^{n}$ and all
positive integers $n.$ Above, as usual, $\left( r_{n}:[0,1]\rightarrow
\mathbb{R}\right) _{n=1}^{\infty }$ is a sequence of independent and
identically distributed random variables defined by
\begin{equation*}
r_{n}(t):=sign\left( \sin 2^{n}\pi t\right) ,
\end{equation*}%
called Rademacher functions. It is folklore that the optimal constants $%
A_{p},B_{p}$ are the same for real and complex scalars, so it suffices to
work with real scalars. It was proved by Szarek (\cite{szarek}) that $%
A_{1}=\left( \sqrt{2}\right) ^{-1}$ is optimal, solving a long standing
problem posed by Littlewood (see \cite{hall}). Later, Haagerup (\cite%
{haagerup}) simplified Szarek's approach and provided the optimal constants
for $p\neq 1$ (see also \cite{latala, tom, young}).

The Khinchin inequality is also valid -- and useful -- for multiple sums. It
is well-known (see \cite{popa}) that regardless of the choice of the
positive integers $m,n$ and scalars $a_{i_{1},\dots ,i_{m}},\,i_{1},\dots
,i_{m}=1,\dots ,n$, we have%
\begin{eqnarray}
&&\left( \sum_{i_{1},\dots ,i_{m}=1}^{n}|a_{i_{1},\dots ,i_{m}}|^{2}\right)
^{\frac{1}{2}}  \label{multik} \\
&\leq &A_{p}^{-m}\left( \int_{[0,1]^{m}}\left\vert \sum_{i_{1},\dots
,i_{m}=1}^{n}a_{i_{1},\dots ,i_{m}}r_{i_{1}}(t_{1})\cdots
r_{i_{m}}(t_{m})\right\vert ^{p}\,dt_{1}\cdots dt_{m}\right) ^{\frac{1}{p}%
}.\,  \notag
\end{eqnarray}

We stress that even in the simple case $m=2,$ the sequence of random
variables $\left( r_{i_{1}}\cdot r_{i_{2}}:[0,1]^{2}\rightarrow \mathbb{R}%
\right) _{n,m=1}^{\infty }$ is not independent.

In the present paper, among other results, we provide the exact blow up rate
of the constants in (\ref{multik}) as $n$ grows when the $\ell _{2}$-norm in
the left-hand-side is replaced by an $\ell _{r}$-norm with $0<r<2$. More
precisely, we prove the following:

\begin{theorem}
\label{7657}Let $m,n$ be positive integers and $\left( a_{i_{1},\dots
,i_{m}}\right) _{i_{1},...,i_{m}=1}^{n}$ be a sequence of real scalars. If $%
0<r<2$, then there is a constant $C_{m,p}>0$ such that
\begin{eqnarray*}
&&\left( \sum_{i_{1},\dots ,i_{m}=1}^{n}|a_{i_{1},\dots ,i_{m}}|^{r}\right)
^{\frac{1}{r}}\, \\
&\leq &C_{m,p}n^{m\left( \frac{1}{r}-\frac{1}{2}\right) }\left(
\int_{[0,1]^{m}}\left\vert \sum_{i_{1},\dots ,i_{m}=1}^{n}a_{i_{1},\dots
,i_{m}}r_{i_{1}}(t_{1})\cdots r_{i_{m}}(t_{m})\right\vert ^{p}\,dt_{1}\cdots
dt_{m}\right) ^{\frac{1}{p}}
\end{eqnarray*}%
and the exponent $m\left( \frac{1}{r}-\frac{1}{2}\right) $ is optimal.
\end{theorem}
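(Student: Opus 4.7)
The proof splits naturally into the upper bound and the sharpness of the exponent, and neither step seems genuinely deep once the right ingredients are identified.

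For the \textbf{upper bound}, the plan is to go through the $\ell_2$--version already recorded as inequality (\ref{multik}). Since $0<r<2$, H\"older's inequality applied to the counting measure on $\{1,\dots,n\}^m$ with the pair of conjugate exponents $(2/r,2/(2-r))$ yields
\[
\Bigl(\sum_{i_1,\dots,i_m=1}^{n}|a_{i_1,\dots,i_m}|^{r}\Bigr)^{1/r}\leq n^{m(1/r-1/2)}\Bigl(\sum_{i_1,\dots,i_m=1}^{n}|a_{i_1,\dots,i_m}|^{2}\Bigr)^{1/2}.
\]
Chaining this with (\ref{multik}) immediately gives the claimed estimate with $C_{m,p}=A_{p}^{-m}$.

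For the \textbf{optimality of the exponent} I would use a factorization test sequence: take $a_{i_1,\dots,i_m}\equiv 1$. The left-hand side becomes $n^{m/r}$. On the right-hand side the Rademacher polynomial factors as a product, so Fubini gives
\[
\int_{[0,1]^{m}}\Bigl|\sum_{i_1,\dots,i_m=1}^{n}r_{i_1}(t_1)\cdots r_{i_m}(t_m)\Bigr|^{p}\,dt=\prod_{j=1}^{m}\int_{0}^{1}\Bigl|\sum_{i=1}^{n}r_{i}(t)\Bigr|^{p}\,dt,
\]
and the classical (one--variable) Khinchin inequality bounds each one-dimensional integral from above by $(B_{p}\sqrt{n})^{p}$. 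Thus the right-hand side of any putative inequality with exponent $s$ in place of $m(1/r-1/2)$ is at most $C\,n^{s}\,n^{m/2}$. Comparing with $n^{m/r}$ and letting $n\to\infty$ forces $s\ge m(1/r-1/2)$.

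In summary, the only two ingredients are (\ref{multik}) and the scalar Khinchin inequality, glued together by H\"older on one side and Fubini together with the factorized test sequence $a\equiv 1$ on the other. The \emph{main potential obstacle} is merely bookkeeping: one should double-check the sharp direction of H\"older so that the exponent $m(1/r-1/2)$ -- and not a worse one -- comes out, and verify that the constant-coefficient test sequence already saturates the inequality asymptotically (as opposed to needing a random Kahane--Salem--Zygmund type construction, which in this multilinear Rademacher setting is unnecessary precisely because the Rademacher polynomial with $a\equiv 1$ factorizes).
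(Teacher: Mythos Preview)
Your argument is correct. The upper bound is exactly the paper's: H\"older from $\ell_r$ to $\ell_2$, then the multiple Khinchin inequality~(\ref{multik}).

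For the optimality, however, you take a genuinely different and more elementary route than the paper. The paper invokes the Kahane--Salem--Zygmund inequality: it fixes the $(m{+}1)$-linear form $T_{m+1,n}$ with random $\pm1$ coefficients and $\|T_{m+1,n}\|\le K_{m+1}n^{(m+2)/2}$, sets $a_{i_1\dots i_m}^{(i_{m+1})}=T_{m+1,n}(e_{i_1},\dots,e_{i_{m+1}})$, sums over the extra index $i_{m+1}$ and uses the duality $(c_0)^*=\ell_1$ to control the right-hand side. This yields optimality first for $p=1$, and the paper then appeals to the $L^p$--$L^q$ equivalence of multiple Rademacher averages (from \cite{popa}) to transfer the conclusion to arbitrary $p$. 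Your choice $a\equiv 1$ bypasses all of this: the Rademacher polynomial factorizes as $\prod_{j}\sum_i r_i(t_j)$, Fubini splits the integral, and the scalar upper Khinchin bound $B_p\sqrt{n}$ finishes the job directly for every $p>0$ at once. What the paper's KSZ machinery buys is robustness---the same template is reused later for the cotype-$2$ Banach space version and is the natural tool when no tensor-product structure is available---but for the theorem as stated your factorized test sequence is both sufficient and cleaner.
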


The main technicality in the proof of the above result arises in the search
of the optimality of the parameters. For this task we shall use, among other
results, a powerful and deep combinatorial probabilistic tool, called
Kahane--Salem--Zygmund inequality.

\section{Preliminaries}

We start off by recalling some terminology. By $c_{0}$ we denote the Banach
space of all real-valued sequences $\left( a_{j}\right) _{j=1}^{\infty }$
such that $\lim_{j\rightarrow \infty }a_{j}=0,$ endowed with the $\sup $
norm. For a multilinear form $T:c_{0}\times \cdots \times c_{0}\rightarrow
\mathbb{R}$ we denote, as usual,%
\begin{equation*}
\Vert T\Vert :=\sup \left\{ \left\vert T\left( x^{(1)},...,x^{(m)}\right)
\right\vert :\left\Vert x^{(j)}\right\Vert =1\text{ for all }%
j=1,...,m\right\} .
\end{equation*}%
For more details on the theory of multilinear forms on Banach spaces we
refer to \cite{mujica}. For the reader's convenience we also recall that the
topological dual of $c_{0}$, denoted by $\left( c_{0}\right) ^{\ast }$ is
isometrically isomorphic to the sequence space of absolutely summable
sequences $\ell _{1}.$

We shall recall three important tools of Probability Theory and multilinear
operators that will be crucial to prove Theorem \ref{7657} and Proposition %
\ref{7658}. The first one is the beautiful Kahane--Salem--Zygmund inequality
(see, for instance, \cite{alb} and \cite{capp} and the references therein):

\begin{theorem}[Kahane--Salem--Zygmund inequality]
\label{sss} Let $m,n\geq 1.$ There is a universal constant $K_{m}>0$,
depending only on $m$, and an $m$-linear form $T_{m,n}\colon c_{0}\times
\cdots \times c_{0}\rightarrow \mathbb{R}$ of the form
\begin{equation*}
T_{m,n}(z^{(1)},...,z^{(m)})=\displaystyle\sum_{i_{1},...,i_{m}=1}^{n}\pm
z_{i_{1}}^{(1)}\cdots z_{i_{m}}^{(m)}
\end{equation*}%
such that
\begin{equation*}
\Vert T_{m,n}\Vert \leq K_{m}n^{\frac{m+1}{2}}.
\end{equation*}
\end{theorem}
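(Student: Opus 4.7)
The plan is to use the probabilistic method: take the signs $\varepsilon_{i_1\ldots i_m} \in \{-1,+1\}$ that define $T_{m,n}$ to be i.i.d.\ Rademacher random variables, and show that with positive probability the resulting random $m$-linear form satisfies the asserted norm estimate. That immediately produces a deterministic realization with the stated bound.

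First I would reduce the supremum defining $\|T_{m,n}\|$ to a finite combinatorial maximum. Because $T_{m,n}$ depends only on the first $n$ coordinates of each input and is multilinear, the supremum over unit-ball inputs in $c_0$ coincides with the supremum over the polydisk $[-1,1]^n$ in each slot, which is in turn attained at an extreme point. Hence
\[
\|T_{m,n}\| = \max\bigl\{ |T_{m,n}(z^{(1)},\ldots,z^{(m)})| : z^{(j)} \in \{-1,+1\}^n,\ 1 \leq j \leq m \bigr\},
\]
a maximum over exactly $(2^n)^m = 2^{mn}$ points.

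Next, for each fixed tuple of sign vectors $z^{(j)} \in \{-1,+1\}^n$, the value
\[
T_{m,n}(z^{(1)},\ldots,z^{(m)}) = \sum_{i_1,\ldots,i_m=1}^{n} \varepsilon_{i_1\ldots i_m}\, z^{(1)}_{i_1}\cdots z^{(m)}_{i_m}
\]
is a Rademacher sum of $n^m$ independent $\pm 1$ terms, since each coefficient $z^{(1)}_{i_1}\cdots z^{(m)}_{i_m}$ is a deterministic sign. The standard Hoeffding/Chernoff subgaussian tail estimate then gives
\[
\mathbb{P}\bigl( |T_{m,n}(z^{(1)},\ldots,z^{(m)})| > t \bigr) \leq 2\exp\!\bigl( -t^2/(2 n^m) \bigr),
\]
and a union bound over the $2^{mn}$ sign tuples yields
\[
\mathbb{P}\bigl( \|T_{m,n}\| > t \bigr) \leq 2^{mn+1}\exp\!\bigl( -t^2/(2 n^m) \bigr).
\]

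Finally, setting $t = K_m n^{(m+1)/2}$ turns the right-hand side into $2^{mn+1}\exp(-K_m^2 n/2)$, which is strictly less than $1$ as soon as $K_m^2 > 2\bigl(m + \tfrac{1}{n}\bigr)\ln 2$. Taking, for instance, $K_m := \sqrt{2(m+1)\ln 2 + 1}$ ensures this for every $n \geq 1$, so with positive probability at least one realization of the signs provides an $m$-linear form meeting the claim. The only delicate step is the reduction from the $c_0$-supremum to a finite extremal maximum over sign patterns; everything else is a routine bookkeeping exercise, and the exponent $(m+1)/2$ is precisely what makes the subgaussian decay $\exp(-K_m^2 n/2)$ beat the combinatorial factor $2^{mn}$ coming from $m$ slots of $2^n$ sign vectors.
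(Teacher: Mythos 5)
The paper states Theorem \ref{sss} without proof, presenting it as a known tool and pointing to \cite{alb} and \cite{capp}, so there is no internal argument to compare yours against. Your probabilistic-method proof is correct and is the standard route to this result. The one step you rightly flag as delicate --- replacing the $c_0$-supremum by a maximum over the $2^{mn}$ sign tuples --- is handled properly: $T_{m,n}$ depends only on the first $n$ coordinates of each slot, $|T_{m,n}|$ is the absolute value of a function affine in each coordinate separately and hence attains its maximum over $[-1,1]^{n}$ at a vertex, and by $m$-homogeneity the supremum over the unit sphere equals that over the unit ball. For fixed sign vectors the random value is a sum of $n^{m}$ independent symmetric $\pm1$ terms, so Hoeffding gives the subgaussian tail $2\exp(-t^{2}/(2n^{m}))$, the union bound contributes the factor $2^{mn}$, and with $t=K_{m}n^{(m+1)/2}$ the exponent becomes $-K_{m}^{2}n/2$, which beats $(mn+1)\ln 2$ for every $n\geq 1$ once $K_{m}^{2}>2(m+1)\ln 2$; your choice $K_{m}=\sqrt{2(m+1)\ln 2+1}$ works. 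The argument is complete and yields an explicit constant, which is all the paper needs from this theorem.
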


As it will be seen in the next section, we shall prove the optimality of
Theorem \ref{7657} by considering, for all $i_{m+1},$%
\begin{equation*}
a_{i_{1}...i_{m}}^{(i_{m+1})}=T_{m+1,n}\left(
e_{i_{1}},...,e_{i_{m+1}}\right) .
\end{equation*}%
To the proof the optimality of Proposition \ref{7658} we shall need a
different approach. We shall consider $m$-linear forms $R_{m}:c_{0}\times
\cdots \times c_{0}\rightarrow \mathbb{R}$ defined inductively by%
\begin{eqnarray*}
R_{2}(x^{(1)},x^{(2)})
&=&x_{1}^{(1)}x_{1}^{(2)}+x_{1}^{(1)}x_{2}^{(2)}+x_{2}^{(1)}x_{1}^{(2)}-x_{2}^{(1)}x_{2}^{(2)},
\\
R_{3}(x^{(1)},x^{(2)},x^{(3)}) &=&\left( x_{1}^{(1)}+x_{2}^{(1)}\right)
\left(
x_{1}^{(2)}x_{1}^{(3)}+x_{1}^{(2)}x_{2}^{(3)}+x_{2}^{(2)}x_{1}^{(3)}-x_{2}^{(2)}x_{2}^{(3)}\right)
\\
&&+\left( x_{1}^{(1)}-x_{2}^{(1)}\right) \left(
x_{3}^{(2)}x_{3}^{(3)}+x_{3}^{(2)}x_{4}^{(3)}+x_{4}^{(2)}x_{3}^{(3)}-x_{4}^{(2)}x_{4}^{(3)}\right) ,
\end{eqnarray*}%
and so on (for details we refer to \cite{nnn}), and consider, for all $%
i_{m+1},$%
\begin{equation*}
a_{i_{1}...i_{m}}^{(i_{m+1})}=R_{m+1}\left( e_{i_{1}},...,e_{i_{m+1}}\right)
.
\end{equation*}%
It shall be important to note (see \cite{nnn}) that that each $R_{m}$ is
composed by precisely $2^{2m-2}$ monomials and that
\begin{equation*}
\left\Vert R_{m}\right\Vert =2^{m-1}.
\end{equation*}%
It is also important for our purposes to note that each $R_{m}$ has exactly $%
2^{m-1}$ monomials involving the coordinates of the last variable $x^{(m)}.$

Finally, we need a \textquotedblleft multiple index\textquotedblright\
version of the Contraction Principle. We present a proof for the sake of
completeness.

\begin{lemma}
\label{778}For all positive integers $m,n$ and vectors $y_{i_{1},\dots
,i_{m}}$ in a Banach space $Y$, $i_{1},\dots ,i_{m}=1,\dots ,n$, we have
\begin{equation*}
\max_{\substack{ i_{k}=1,\dots ,n  \\ k=1,\dots ,m}}\left\Vert
y_{i_{1},\dots ,i_{m}}\right\Vert \leq \int_{\lbrack 0,1]^{m}}\left\Vert
\sum_{i_{1},\dots ,i_{m}=1}^{n}r_{i_{1}}(t_{1})\cdots
r_{i_{m}}(t_{m})y_{i_{1},\dots ,i_{m}}\right\Vert \,dt_{1}\cdots dt_{m}.
\end{equation*}
\end{lemma}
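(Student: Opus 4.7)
The plan is to deduce the multi-index contraction from the classical single-index contraction, and then iterate.

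\emph{Step 1: single-index case.} I first establish that for any $v_{1},\dots ,v_{n}\in Y$ and any fixed index $j$,
\begin{equation*}
\|v_{j}\| \le \int_{0}^{1}\Big\|\sum_{i=1}^{n}r_{i}(t)v_{i}\Big\|\,dt.
\end{equation*}
By the symmetry of the Rademacher system, the joint distribution of $(r_{1},\dots ,r_{n})$ coincides with that of $(r_{1},\dots ,-r_{j},\dots ,r_{n})$, so the integral on the right equals $\int_{0}^{1}\|\sum_{i\ne j}r_{i}(t)v_{i}-r_{j}(t)v_{j}\|\,dt$. Averaging the two equal expressions and invoking $\|u+w\|+\|u-w\|\ge 2\|w\|$ with $u=\sum_{i\ne j}r_{i}(t)v_{i}$ and $w=r_{j}(t)v_{j}$ yields the inequality, since $|r_{j}(t)|=1$ almost everywhere.

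\emph{Step 2: iteration.} Fix a target multi-index $(j_{1},\dots ,j_{m})$. Holding $t_{2},\dots ,t_{m}$ fixed, apply Step 1 to the $Y$-valued vectors $v_{i_{1}}:=\sum_{i_{2},\dots ,i_{m}}r_{i_{2}}(t_{2})\cdots r_{i_{m}}(t_{m})\,y_{i_{1},\dots ,i_{m}}$ to obtain $\|v_{j_{1}}\|\le \int_{0}^{1}\|\sum_{i_{1}}r_{i_{1}}(t_{1})v_{i_{1}}\|\,dt_{1}$. Integrating this pointwise bound over $(t_{2},\dots ,t_{m})\in [0,1]^{m-1}$ gives
\begin{equation*}
\int_{[0,1]^{m-1}}\Big\|\sum_{i_{2},\dots ,i_{m}}r_{i_{2}}(t_{2})\cdots r_{i_{m}}(t_{m})y_{j_{1},i_{2},\dots ,i_{m}}\Big\|\,dt_{2}\cdots dt_{m}\le \int_{[0,1]^{m}}\Big\|\sum_{i_{1},\dots ,i_{m}}r_{i_{1}}(t_{1})\cdots r_{i_{m}}(t_{m})y_{i_{1},\dots ,i_{m}}\Big\|\,dt.
\end{equation*}
Now repeat the same trick in $t_{2}$ applied to the vectors $v_{i_{2}}':=\sum_{i_{3},\dots ,i_{m}}r_{i_{3}}(t_{3})\cdots r_{i_{m}}(t_{m})y_{j_{1},i_{2},\dots ,i_{m}}$, integrate over $t_{3},\dots ,t_{m}$, and combine. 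After $m$ such peelings the left-hand side collapses to $\|y_{j_{1},\dots ,j_{m}}\|$, and taking the maximum over all $(j_{1},\dots ,j_{m})$ yields the claim.

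\emph{Main obstacle.} There is no deep analytic difficulty; the entire content lies in the symmetry argument of Step 1. The only care required is notational bookkeeping during the iterated reduction, which goes through cleanly because the Rademacher variables indexed by distinct coordinates $t_{k}$ are independent, so each symmetry/averaging step can be performed in one variable with the others acting as parameters.
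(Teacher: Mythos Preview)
Your proof is correct and follows essentially the same route as the paper: establish the single-index bound and then peel off one Rademacher variable at a time via Fubini, which is exactly the paper's induction on $m$. The only cosmetic difference is that the paper cites the Contraction Principle for the base case, whereas you supply the standard symmetry-and-triangle-inequality argument for it directly.
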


\begin{proof}
The case $m=1$ is the Contraction Principle (see \cite[Theorem 12.2]{diestel}%
). Let us suppose, as the induction step, that the result is valid for $m-1$%
. Thus, for all positive integers $i_{1},\dots ,i_{m}$, we have
\begin{align*}
& \int_{[0,1]^{m}}\left\Vert \sum_{i_{1},\dots
,i_{m}=1}^{n}r_{i_{1}}(t_{1})\cdots r_{i_{m}}(t_{m})y_{i_{1},\dots
,i_{m}}\right\Vert \,dt_{1}\cdots dt_{m} \\
& =\int_{[0,1]^{m-1}}\left( \int_{0}^{1}\left\Vert
\sum_{i_{1}=1}^{n}r_{i_{1}}(t_{1})\left( \sum_{i_{2},\dots
,i_{m}=1}^{n}r_{i_{2}}(t_{2})\cdots r_{i_{m}}(t_{m})y_{i_{1},\dots
,i_{m}}\right) \right\Vert \,dt_{1}\right) \,dt_{2}\cdots dt_{m} \\
& \geq \int_{\lbrack 0,1]^{m-1}}\left\Vert \sum_{i_{2},\dots
,i_{m}=1}^{n}r_{i_{2}}(t_{2})\cdots r_{i_{m}}(t_{m})y_{i_{1},\dots
,i_{m}}\right\Vert \,dt_{2}\cdots dt_{m} \\
& \geq \left\Vert y_{i_{1},\dots ,i_{m}}\right\Vert .
\end{align*}
\end{proof}

\section{The proof of the main theorem}

Let us first show that there is a $t_{m,p}>0$ and a certain constant $%
C_{m,p}>0$ such that%
\begin{eqnarray}
&&\left( \sum_{i_{1},\dots ,i_{m}=1}^{n}|a_{i_{1},\dots ,i_{m}}|^{r}\right)
^{\frac{1}{r}}  \label{hgg} \\
&\leq &C_{m,p}n^{t_{m,p}}\left( \int_{[0,1]^{m}}\left\vert \sum_{i_{1},\dots
,i_{m}=1}^{n}r_{i_{1}}(t_{1})\cdots r_{i_{m}}(t_{m})a_{i_{1},\dots
,i_{m}}\right\vert ^{p}\,dt_{1}\cdots dt_{m}\right) ^{1/p}  \notag
\end{eqnarray}%
for all sequences $\left( a_{i_{1},...,i_{m}}\right)
_{i_{1},...,i_{m}=1}^{n} $ and all $n.$

Let $s>0$ be such that $\frac{1}{r}=\frac{1}{2}+\frac{1}{s}$. By the H\"{o}%
lder inequality and (\ref{multik}) with $p=1$ we have%
\begin{eqnarray*}
\left( \sum_{i_{1},\dots ,i_{m}=1}^{n}|a_{i_{1},\dots ,i_{m}}|^{r}\right) ^{%
\frac{1}{r}}\  &\leq &\left( \sum_{i_{1},\dots ,i_{m}=1}^{n}|a_{i_{1},\dots
,i_{m}}|^{2}\right) ^{\frac{1}{2}}\cdot \left( \sum\limits_{{i_{1},\dots
,i_{m}=1}}^{n}1^{s}\right) ^{\frac{1}{s}} \\
&\leq &2^{\frac{m}{2}}\cdot \left( \int_{\lbrack 0,1]^{m}}\left\vert
\sum_{i_{1},\dots ,i_{m}=1}^{n}r_{i_{1}}(t_{1})\cdots
r_{i_{m}}(t_{m})a_{i_{1},\dots ,i_{m}}\right\vert \,dt_{1}\cdots
dt_{m}\right) \cdot n^{\frac{m}{s}} \\
&=&2^{\frac{m}{2}}n^{m\left( \frac{1}{r}-\frac{1}{2}\right) }\cdot
\int_{\lbrack 0,1]^{m}}\left\vert \sum_{i_{1},\dots
,i_{m}=1}^{n}r_{i_{1}}(t_{1})\cdots r_{i_{m}}(t_{m})a_{i_{1},\dots
,i_{m}}\right\vert \,dt_{1}\cdots dt_{m}.
\end{eqnarray*}%
Now we show that the best estimate for $t_{m,1}$ in (\ref{hgg}) is precisely
$m\left( \frac{1}{r}-\frac{1}{2}\right) .$ In fact, let $T_{m+1,n}$ be given
by the Kahane--Salem--Zygmund inequality (Theorem \ref{sss}). Since $\left(
c_{0}\right) ^{\ast }=\ell _{1}$ we have
\begin{eqnarray*}
&&\sum\limits_{i_{m+1}=1}^{n}\left(
\sum\limits_{i_{1},...,i_{m}=1}^{n}\left\vert T_{m+1,n}\left(
e_{i_{1}},...,e_{i_{m+1}}\right) \right\vert ^{r}\right) ^{\frac{1}{r}} \\
&\leq
&\sum\limits_{i_{m+1}=1}^{n}C_{m,p}n^{t_{m,1}}\int_{[0,1]^{m}}\left\vert
\sum_{i_{1},\dots ,i_{m}=1}^{n}r_{i_{1}}(t_{1})\cdots
r_{i_{m}}(t_{m})T_{m+1,n}\left( e_{i_{1}},e_{i_{2}},...,e_{i_{m+1}}\right)
\right\vert \,dt_{1}\cdots dt_{m} \\
&=&C_{m,p}n^{t_{m,1}}\int_{[0,1]^{m}}\sum\limits_{i_{m+1}=1}^{n}\left\vert
T_{m+1,n}\left(
\sum_{i_{1}=1}^{n}r_{i_{1}}(t_{1})e_{i_{1}},...,%
\sum_{i_{m}=1}^{n}r_{i_{m}}(t_{m})e_{i_{m}},e_{i_{m+1}}\right) \right\vert
\,dt_{1}\cdots dt_{m} \\
&\leq &C_{m,p}n^{t_{m,1}}\sup_{t_{1},...,t_{m}\in \lbrack 0,1]}\left\Vert
T_{m+1,n}\left(
\sum_{i_{1}=1}^{n}r_{i_{1}}(t_{1})e_{i_{1}},...,%
\sum_{i_{m}=1}^{n}r_{i_{m}}(t_{m})e_{i_{m}},\cdot \right) \right\Vert  \\
&\leq &C_{m}n^{t_{m,1}}K_{m+1}n^{\frac{m+2}{2}}.
\end{eqnarray*}%
On the other hand,
\begin{equation*}
\sum\limits_{i_{m+1}=1}^{n}\left(
\sum\limits_{i_{1},...,i_{m}=1}^{n}\left\vert T_{m+1,n}\left(
e_{i_{1}},...,e_{i_{m+1}}\right) \right\vert ^{r}\right) ^{\frac{1}{r}%
}=n\cdot n^{\frac{m}{r}}.
\end{equation*}%
Hence%
\begin{equation*}
n^{1+\frac{m}{r}}\leq C_{m,p}n^{t_{m,1}}K_{m+1}n^{\frac{m+2}{2}}
\end{equation*}%
for all $n$. Since $n$ is arbitrary, we have%
\begin{equation*}
t_{m,1}\geq m\left( \frac{1}{r}-\frac{1}{2}\right) .
\end{equation*}%
By \cite{popa} we know that for any $p,q>0$ and all positive integers $m,$
there is a constant $C_{m,p,q}>0$ such that
\begin{eqnarray*}
&&\left( \int_{[0,1]^{m}}\left\vert \sum_{i_{1},\dots
,i_{m}=1}^{n}a_{i_{1},\dots
,i_{m}}\prod\limits_{j=1}^{m}r_{i_{j}}(t_{j})\right\vert ^{p}\,dt_{1}\cdots
dt_{m}\right) ^{\frac{1}{p}} \\
&\leq &C_{m,p,q}\left( \int_{[0,1]^{m}}\left\vert \sum_{i_{1},\dots
,i_{m}=1}^{n}a_{i_{1},\dots
,i_{m}}\prod\limits_{j=1}^{m}r_{i_{j}}(t_{j})\right\vert ^{q}\,dt_{1}\cdots
dt_{m}\right) ^{\frac{1}{q}}
\end{eqnarray*}%
and thus we conclude that the optimal $t_{m,p}$ coincides with the optimal $%
t_{m,1},$ regardless of the $p>0$, and the proof is done.

\begin{remark}
\label{hh77}If $0<r_{j}<2$ for all $j=1,...,m$, using the mixed H\"{o}lder
inequality (see \cite{ff}) and repeating the arguments of the proof of
Theorem \ref{7657} we can prove that there is a constant $C_{m,p}>0$ such
that%
\begin{eqnarray*}
&&\left( \sum_{i_{1}=1}^{n}\left( \sum\limits_{i_{2}=1}^{n}\left( \cdots
\left( \sum\limits_{i_{m}=1}^{n}\left\vert a_{i_{1},\dots ,i_{m}}\right\vert
^{r_{m}}\right) ^{\frac{1}{r_{m}}}\cdots \right) ^{\frac{1}{r_{3}}\times
r_{2}}\right) ^{\frac{1}{r_{2}}\times r_{1}}\right) ^{\frac{1}{r_{1}}} \\
&\leq &C_{m,p}\cdot n^{\left( \sum_{j=1}^{m}\frac{1}{r_{j}}\right) -\frac{m}{%
2}}\left( \int_{[0,1]^{m}}\left\vert \sum_{i_{1},\dots
,i_{m}=1}^{n}r_{i_{1}}(t_{1})\cdots r_{i_{m}}(t_{m})a_{i_{1},\dots
,i_{m}}\right\vert ^{p}\,dt_{1}\cdots dt_{m}\right) ^{1/p}
\end{eqnarray*}%
and that the exponent $\left( \sum_{j=1}^{n}\frac{1}{r_{j}}\right) -\frac{m}{%
2}$ is sharp.
\end{remark}

\section{Optimal constants for variants of the Khinchin inequality}

We begin this section by providing the optimal constants satisfying (\ref%
{multik}) when $p=1$ and $r\geq 2:$

\begin{proposition}
\label{7658}Let $m,n$ be positive integers and $\left( a_{i_{1},\dots
,i_{m}}\right) _{i_{1},...,i_{m}=1}^{n}$ be a sequence of real scalars. If $%
r\geq 2$, then
\begin{equation}
\left( \sum_{i_{1},\dots ,i_{m}=1}^{n}|a_{i_{1},\dots ,i_{m}}|^{r}\right) ^{%
\frac{1}{r}}\,\leq 2^{\frac{m}{r}}\int_{[0,1]^{m}}\left\vert
\sum_{i_{1},\dots ,i_{m}=1}^{n}r_{i_{1}}(t_{1})\cdots
r_{i_{m}}(t_{m})a_{i_{1},\dots ,i_{m}}\right\vert \,dt_{1}\cdots dt_{m}
\label{st}
\end{equation}%
and the estimate $2^{\frac{m}{r}}$ is optimal.
\end{proposition}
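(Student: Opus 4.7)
The plan is to treat the upper bound and the optimality of $2^{m/r}$ separately. For the upper bound, I would build the constant by interpolating between $\ell_{\infty}$ and $\ell_2$. Writing $I$ for the $L^1$-integral on the right-hand side of \eqref{st}, Lemma~\ref{778} applied with $Y=\mathbb{R}$ yields
\[
\|a\|_\infty := \max_{i_1,\dots,i_m}|a_{i_1,\dots,i_m}| \leq I,
\]
while \eqref{multik} with $p=1$ combined with the Szarek--Haagerup optimal value $A_1=2^{-1/2}$ gives
\[
\|a\|_2 := \Bigl(\textstyle\sum_{i_1,\dots,i_m}|a_{i_1,\dots,i_m}|^2\Bigr)^{1/2} \leq 2^{m/2}\,I.
\]
The elementary inequality $|a_{i_1,\dots,i_m}|^r = |a_{i_1,\dots,i_m}|^{r-2}|a_{i_1,\dots,i_m}|^2 \leq \|a\|_\infty^{r-2}|a_{i_1,\dots,i_m}|^2$ (which requires $r\geq 2$), summed over all indices, produces the log-convexity bound $\|a\|_r \leq \|a\|_\infty^{1-2/r}\|a\|_2^{2/r}$. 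Plugging in the previous two estimates yields $\|a\|_r \leq I^{1-2/r}\,(2^{m/2}I)^{2/r}=2^{m/r}\,I$, which is precisely \eqref{st}.

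For the optimality, I would follow the strategy advertised in the preliminaries and test the inequality on the inductively defined forms $R_{m+1}$. Put $a_{i_1,\dots,i_m}^{(i_{m+1})}:=R_{m+1}(e_{i_1},\dots,e_{i_{m+1}})\in\{0,\pm 1\}$, apply \eqref{st} with an unknown best constant $C$ in place of $2^{m/r}$ to each slice at fixed $i_{m+1}$, and sum over $i_{m+1}$. Using multilinearity one can pull the Rademacher functions inside $R_{m+1}$, and the identification $(c_0)^*=\ell_1$ turns the resulting inner sum into the $(c_0)^*$-norm of the last slot, which is at most $\|R_{m+1}\|\cdot\prod_{k=1}^{m}\bigl\|\sum_{i_k} r_{i_k}(t_k)e_{i_k}\bigr\|_\infty = 2^m$, by the recorded identity $\|R_{m+1}\|=2^m$. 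On the other side, the structural properties of $R_{m+1}$ noted in the preliminaries (exactly $2^m$ coordinates of its last variable are active, each appearing in exactly $2^m$ of the $2^{2m}$ monomials, all with coefficient $\pm 1$) force $\sum_{i_1,\dots,i_m}|a^{(i_{m+1})}_{i_1,\dots,i_m}|^r = 2^m$ for $2^m$ choices of $i_{m+1}$ and $0$ otherwise, so the summed left-hand side equals $2^m\cdot 2^{m/r}$. Comparing the two estimates gives $C\geq 2^{m/r}$.

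The main obstacle I anticipate is not the interpolation argument but the combinatorial bookkeeping in the optimality step: one has to be certain that $R_{m+1}$ uses precisely $2^m$ distinct coordinates of its last variable and that each one is touched by exactly $2^m$ monomials, which must be extracted from the recursive definition of the $R_m$. This is also why the Kahane--Salem--Zygmund form $T_{m+1,n}$ used in Theorem~\ref{7657} would not work here: $T_{m+1,n}$ is too ``spread out'' across the full range of $n$ coordinates in the last variable, so it yields the correct exponent but not the sharp constant, whereas the rigidly structured $R_{m+1}$ saturates $\|R_{m+1}\|=2^m$ while keeping only $2^m$ active coordinates in the last slot, exactly what is needed to extract the constant $2^{m/r}$.
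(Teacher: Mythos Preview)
Your proposal is correct and follows essentially the same route as the paper: the upper bound is obtained by interpolating the $\ell_r$-norm between the $\ell_2$-bound from \eqref{multik} (with $A_1=2^{-1/2}$) and the $\ell_\infty$-bound from Lemma~\ref{778}, and optimality is shown by testing on the forms $R_{m+1}$, summing over the last index, and invoking $\|R_{m+1}\|=2^m$ together with the monomial count. Your added remark on why $T_{m+1,n}$ would not suffice for sharpness is accurate and consistent with the paper's choice.
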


Let us denote by $C_{r}$ the optimal constant satisfying%
\begin{equation}
\left( \sum_{i_{1},\dots ,i_{m}=1}^{n}|a_{i_{1},\dots ,i_{m}}|^{r}\right) ^{%
\frac{1}{r}}\,\leq C_{r}\int_{[0,1]^{m}}\left\vert \sum_{i_{1},\dots
,i_{m}=1}^{n}r_{i_{1}}(t_{1})\cdots r_{i_{m}}(t_{m})a_{i_{1},\dots
,i_{m}}\right\vert \,dt_{1}\cdots dt_{m}  \label{jj}
\end{equation}%
for all sequence of scalars $\left( a_{i_{1},\dots ,i_{m}}\right)
_{i_{1},\dots ,i_{m}=1}^{n},$ for all $n.$ Let $\theta =\frac{2}{r};$ by the
H\"{o}lder inequality, (\ref{multik}) and Lemma \ref{778} we conclude that%
\begin{align*}
\left( \sum_{i_{1},\dots ,i_{m}=1}^{n}|a_{i_{1},\dots ,i_{m}}|^{r}\right) ^{%
\frac{1}{r}}& \leq \left( \sum_{i_{1},\dots ,i_{m}=1}^{n}|a_{i_{1},\dots
,i_{m}}|^{2}\right) ^{\frac{\theta }{2}}\cdot \left( \max_{\substack{ %
i_{k}=1,\dots ,n  \\ k=1,\dots ,m}}\left\vert a_{i_{1},\dots
,i_{m}}\right\vert \right) ^{1-\theta } \\
& \leq 2^{\frac{m\theta }{2}}\int_{[0,1]^{m}}\left\vert \sum_{i_{1},\dots
,i_{m}=1}^{n}r_{i_{1}}(t_{1})\cdots r_{i_{m}}(t_{m})a_{i_{1},\dots
,i_{m}}\right\vert \,dt_{1}\cdots dt_{m} \\
& =2^{\frac{m}{r}}\int_{[0,1]^{m}}\left\vert \sum_{i_{1},\dots
,i_{m}=1}^{n}r_{i_{1}}(t_{1})\cdots r_{i_{m}}(t_{m})a_{i_{1},\dots
,i_{m}}\right\vert \,dt_{1}\cdots dt_{m},
\end{align*}%
Now let us prove that the constant $2^{\frac{m}{r}}$ is sharp. Let $R_{m+1}$
be the $m+1$-linear form defined in the Section 2. Using that $\left(
c_{0}\right) ^{\ast }=\ell _{1}$, we have%
\begin{align*}
& \sum\limits_{i_{m+1}=1}^{2^{m}}\left( \sum\limits_{i_{1},\dots
,i_{m}=1}^{2^{m}}\left\vert R_{m+1}\left(
e_{i_{1}},e_{i_{2}},...,e_{i_{m+1}}\right) \right\vert ^{r}\right) ^{\frac{1%
}{r}} \\
& \leq \sum\limits_{i_{m+1}=1}^{2^{m}}C_{r}\int_{[0,1]^{m}}\left\vert
\sum_{i_{1},\dots ,i_{m}=1}^{2^{m}}r_{i_{1}}(t_{1})\cdots
r_{i_{m}}(t_{m})R_{m+1}\left( e_{i_{1}},e_{i_{2}},...,e_{i_{m+1}}\right)
\right\vert \,dt_{1}\cdots dt_{m} \\
& =C_{r}\int_{[0,1]^{m}}\sum\limits_{i_{m+1}=1}^{2^{m}}\left\vert
R_{m+1}\left(
\sum_{i_{1}=1}^{2^{m}}r_{i_{1}}(t_{1})e_{i_{1}},...,%
\sum_{i_{m}=1}^{2^{m}}r_{i_{m}}(t_{m})e_{i_{m}},e_{i_{m+1}}\right)
\right\vert \,dt_{1}\cdots dt_{m} \\
& \leq C_{r}\sup_{t_{1},..,t_{m\in \lbrack
0,1]}}\sum\limits_{i_{m+1}=1}^{2^{m}}\left\vert R_{m+1}\left(
\sum_{i_{1}=1}^{2^{m}}r_{i_{1}}(t_{1})e_{i_{1}},...,%
\sum_{i_{m}=1}^{2^{m}}r_{i_{m}}(t_{m})e_{i_{m}},e_{i_{m+1}}\right)
\right\vert \\
& \leq 2^{m}C_{r}.
\end{align*}%
On the other hand, since $R_{m+1}$ has exactly $2^{m}$ monomials involving
the coordinates of the last variable and since $R_{m+1}$ has a total of $%
2^{2m}$ monomials, we conclude that%
\begin{equation*}
\sum\limits_{i_{m+1}=1}^{2^{m}}\left( \sum\limits_{i_{1},\dots
,i_{m}=1}^{2^{m}}\left\vert R_{m+1}\left(
e_{i_{1}},e_{i_{2}},...,e_{i_{m+1}}\right) \right\vert ^{r}\right) ^{\frac{1%
}{r}}=2^{m}\cdot (2^{m})^{\frac{1}{r}}.
\end{equation*}%
Thus,
\begin{equation*}
2^{m}\cdot 2^{\frac{m}{r}}\leq 2^{m}C_{r}
\end{equation*}%
and we obtain
\begin{equation*}
C_{r}\geq 2^{\frac{m}{r}},
\end{equation*}%
completing the proof.

\begin{remark}
It sounds reasonable that there exists a more direct proof of Proposition %
\ref{7658}. However, the fact that in general $\left(
\prod\limits_{j=1}^{m}r_{i_{j}}:[0,1]^{m}\rightarrow \mathbb{R}\right)
_{i_{1},...,i_{m}=1}^{\infty }$ is not independent may be an additional
difficulty.
\end{remark}

\section{Blow up rate of Kahane type inequalities}

Let $2\leq q<\infty $ and $s>0$. A Banach space $Y$ has cotype $q$ (see \cite{diestel, perez}) if there
is a constant $C>0$ such that, no matter how we select finitely many vectors
$y_{1},\dots ,y_{n}\in Y$,
\begin{equation}
\left( \sum_{k=1}^{n}\Vert y_{k}\Vert ^{q}\right) ^{\frac{1}{q}}\leq C\left(
\int_{[0,1]}\left\Vert \sum_{k=1}^{n}r_{k}(t)y_{k}\right\Vert ^{s}dt\right)
^{\frac{1}{s}}.
\end{equation}%
The smallest of all these constants is denoted by $C_{q}(Y)$ when $s=2$ and $%
c_{q}(Y)$ when $s=q$. The Kahane inequality (below) shows that the choice of
$s$ is not relevant (modulo the constant involved):

\begin{theorem}[Kahane Inequality]
If $0<p,q<\infty ,$ then there is a constant $K_{p,q}>0$ for which
\begin{equation*}
\left( \int_{\lbrack 0,1]}\left\Vert \sum_{k=1}^{n}r_{k}(t)y_{k}\right\Vert
^{q}dt\right) ^{\frac{1}{q}}\leq K_{p,q}\left( \int_{[0,1]}\left\Vert
\sum_{k=1}^{n}r_{k}(t)y_{k}\right\Vert ^{p}dt\right) ^{\frac{1}{p}}
\end{equation*}%
holds, regardless of the choice of a Banach space $Y$ and of finitely many
vectors $y_{1},\dots ,y_{n}\in Y$.
\end{theorem}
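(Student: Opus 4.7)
The plan is to prove the Kahane inequality by deducing exponential tail bounds for the random variable $f(t) := \bigl\|\sum_{k=1}^n r_k(t) y_k\bigr\|_Y$, following the Hoffmann-J\o rgensen--Kahane approach.

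First, I would reduce the problem to the case $0 < p < q$, since for $p \geq q$ H\"older's inequality on the probability space $[0,1]$ yields $\|f\|_q \leq \|f\|_p$ with constant one. By composing two instances of the inequality it further suffices to establish the estimate for $p = 1$ and arbitrary $q > 1$; the range $0 < p < 1$ can then be recovered by an interpolation-type argument using the trivial bound $\|f\|_1 \leq \|f\|_q$ together with the result for $p = 1$.

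The technical core is the vector-valued Hoffmann-J\o rgensen inequality applied to the independent symmetric summands $\xi_k := r_k y_k$: for all $s, u > 0$,
\[
\mathbb{P}(f > 2s + u) \leq 4\,[\mathbb{P}(f > s)]^2 + \mathbb{P}\Bigl(\max_{1 \leq k \leq n}\|\xi_k\| > u\Bigr).
\]
This is proved by stopping at the first index at which $\bigl\|\sum_{j \leq k} \xi_j\bigr\|$ exceeds $s$ and exploiting the symmetry of the tail sum under the reflection $\xi_j \mapsto -\xi_j$ for $j$ past the stopping time. Since $\|\xi_k\| = \|y_k\|$ is deterministic, the max-term is controlled once $u$ exceeds $\max_k \|y_k\|$. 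Next, choosing $s_0 \leq C \|f\|_1$ via Markov so that $\mathbb{P}(f > s_0) \leq 1/16$, iterating produces a bound $\mathbb{P}(f > t) \leq C \exp(-c\,t/\|f\|_1)$ with universal constants $C, c > 0$, and integrating against $q\,t^{q-1}\,dt$ then yields $\|f\|_q \leq K_q \|f\|_1$.

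The main obstacle I anticipate is the vector-valued Hoffmann-J\o rgensen deviation inequality itself; the symmetrization/stopping-time argument must carefully handle the triangle inequality in $Y$ and the interaction between the stopping time and the independence structure of the Rademacher sequence. Once this deviation bound is in hand, the rest of the proof reduces to elementary iteration and integration, and the dependence of the final constant $K_{p,q}$ on $p$ and $q$ follows by tracking the moments of the exponential tail.
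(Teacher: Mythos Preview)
Your proof sketch is a correct and standard route to Kahane's inequality (essentially the Hoffmann--J\o rgensen/Ledoux--Talagrand argument), but there is nothing in the paper to compare it against: the paper does not prove this theorem. Kahane's inequality is stated there as a classical background result and used as a black box (the paper's reference list includes \cite{diestel}, where a full proof along lines similar to yours can be found), so no proof is given or even outlined. If your goal is to supply a proof where the paper omits one, your plan is sound; just be aware that the iteration from the Hoffmann--J\o rgensen bound to subgaussian/subexponential tails and then to moment comparison requires some bookkeeping (in particular, bounding $\max_k\|y_k\|$ by a multiple of $\|f\|_1$ via the contraction principle) that you have only alluded to.
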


From now on $K_{p,q}$ denotes the optimal constant of the Kahane inequality.
As it happens for the Khinchin inequality, we have a Kahane inequality for
multiple indexes (see, for instance, \cite{achour}):

\begin{theorem}[Multiple Kahane Inequality]
\label{6f}If $0<p,q<\infty ,$ then
\begin{eqnarray*}
&&\left( \int_{\lbrack 0,1]^{m}}\left\Vert \sum_{i_{1},\dots
,i_{m}=1}^{n}y_{i_{1},\dots ,i_{m}}r_{i_{1}}(t_{1})\cdots
r_{i_{m}}(t_{m})\right\Vert ^{q}dt_{1}...dt_{m}\right) ^{\frac{1}{q}} \\
&\leq &K_{p,q}^{m}\left( \int_{[0,1]^{m}}\left\Vert \sum_{i_{1},\dots
,i_{m}=1}^{n}y_{i_{1},\dots ,i_{m}}r_{i_{1}}(t_{1})\cdots
r_{i_{m}}(t_{m})\right\Vert ^{p}dt_{1}...dt_{m}\right) ^{\frac{1}{p}},
\end{eqnarray*}%
for all Banach spaces $Y$ and all $y_{i_{1},\dots ,i_{m}}$ in $Y$.
\end{theorem}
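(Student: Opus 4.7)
The plan is to prove Theorem \ref{6f} by induction on the number $m$ of tensor indices, with the base case $m=1$ being nothing but the one-index Kahane inequality just stated. The key idea for the inductive step is to perform a \emph{single} application of the one-dimensional Kahane inequality, but carried out in the vector-valued Banach space $V:=L^{q}([0,1]^{m-1},Y)$ rather than in $Y$ itself. This avoids having to juggle mixed-norm orders through Minkowski's integral inequality and keeps the constant exactly $K_{p,q}$ per inductive step, producing $K_{p,q}^{m}$ after $m$ iterations.

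Concretely, for the passage from $m-1$ to $m$ I would introduce
\[
v_{i_{1}}(t_{2},\dots ,t_{m}):=\sum_{i_{2},\dots ,i_{m}=1}^{n}y_{i_{1},\dots ,i_{m}}\,r_{i_{2}}(t_{2})\cdots r_{i_{m}}(t_{m})\in V,
\]
and apply the one-dimensional Kahane inequality in $V$ to the finite sequence $(v_{i_{1}})_{i_{1}=1}^{n}$. Writing $S(t_{1},\dots ,t_{m})$ for the full Rademacher sum, Fubini identifies $\bigl(\int_{0}^{1}\|\sum_{i_{1}}v_{i_{1}}r_{i_{1}}(t_{1})\|_{V}^{q}\,dt_{1}\bigr)^{1/q}$ with the full norm $\|S\|_{L^{q}([0,1]^{m},Y)}$, so Kahane in $V$ produces
\[
\|S\|_{L^{q}([0,1]^{m},Y)}\,\leq \,K_{p,q}\left(\int_{0}^{1}\|S(t_{1},\cdot )\|_{L^{q}([0,1]^{m-1},Y)}^{p}\,dt_{1}\right)^{1/p}.
\]
For each fixed $t_{1}$ the expression $S(t_{1},\cdot )$ is an $(m-1)$-index Rademacher sum in $Y$ with vector coefficients $w_{i_{2},\dots ,i_{m}}(t_{1}):=\sum_{i_{1}}y_{i_{1},\dots ,i_{m}}r_{i_{1}}(t_{1})$, so the induction hypothesis controls its inner $L^{q}$-norm by $K_{p,q}^{m-1}$ times its $L^{p}$-norm. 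Raising to the $p$-th power, integrating in $t_{1}$, and applying Fubini once more (both exponents now equal to $p$) collapses the right-hand side to $K_{p,q}^{m-1}\|S\|_{L^{p}([0,1]^{m},Y)}$; the overall constant is then $K_{p,q}\cdot K_{p,q}^{m-1}=K_{p,q}^{m}$, as required.

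The main subtlety is that this argument applies the Kahane inequality in $V$, and the statement in the excerpt is phrased for Banach spaces, so we need $q\geq 1$ to guarantee $V$ is genuinely Banach. This covers the nontrivial range of the theorem: the case $p\geq q$ is immediate on the probability space $[0,1]^{m}$ with the constant $1\leq K_{p,q}^{m}$, and the remaining range $0<p<q<1$ can be dealt with either through the quasi-Banach extension of Kahane's inequality applied verbatim in the same way, or by routing through the intermediate exponent $1$. I do not expect any conceptual difficulty beyond this bookkeeping; the heart of the proof is the one-step induction that lifts a scalar one-dimensional Kahane inequality into the vector-valued Banach space $V$.
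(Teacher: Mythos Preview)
Your inductive argument is correct and is precisely the standard proof of the multiple Kahane inequality; the paper itself does not supply a proof but merely quotes the result with a reference to \cite{achour}, so there is nothing to compare against beyond noting that your approach matches the one found in that literature.

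One small point worth tightening: your ``routing through the intermediate exponent $1$'' for the range $0<p<q<1$ yields the constant $K_{p,1}^{m}$ rather than $K_{p,q}^{m}$, and since $q<1$ implies $K_{p,q}\leq K_{p,1}$, this is in general a larger constant than the one claimed in the statement. Your other suggestion---running the same induction with the quasi-Banach space $V=L^{q}([0,1]^{m-1},Y)$ and invoking the quasi-Banach version of Kahane's inequality---is the clean fix, and it does recover $K_{p,q}^{m}$ verbatim, since Kahane's inequality is known to hold in quasi-Banach spaces with the same universal constants. This is a bookkeeping issue only; the core induction you outline is sound.
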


The following result shows how cotype $q$ spaces behave with sums in
multiple indexes (see, for instance, \cite[Lemma 3.9]{perez}):

\begin{theorem}[Multiple cotype inequality]
\label{prop3dan} Let $Y$ be a cotype $q$ space. If $\left( y_{i_{1}\ldots
i_{m}}\right) _{i_{1},\cdots ,i_{m}=1}^{n}$ is a matrix in $Y$, then
\begin{equation*}
\left( \sum_{i_{1},\ldots ,i_{m}=1}^{n}\left\Vert y_{i_{1}\cdots
i_{m}}\right\Vert ^{q}\right) ^{1/q}\leq c_{q}(Y)^{m}\left(
\int_{[0,1]^{m}}\left\Vert \sum_{i_{1},...,i_{m}=1}^{n}y_{i_{1}\ldots
i_{m}}r_{i_{1}}(t_{1})\cdots r_{i_{m}}(t_{m})\right\Vert ^{q}dt_{1}\cdots
dt_{m}\right) ^{1/q}.
\end{equation*}
\end{theorem}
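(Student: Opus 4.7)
The plan is to prove the multiple cotype inequality by a straightforward induction on $m$, applying the defining cotype $q$ inequality (with the parameter $s=q$, so that the constant involved is $c_q(Y)$) one variable at a time, and using Fubini's theorem to interchange a sum and an integral at each step.

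The base case $m=1$ is exactly the definition of $c_q(Y)$. For the inductive step, assume the result for matrices of order $m-1$. Fix $i_{1}\in\{1,\dots,n\}$ and apply the induction hypothesis to the $(m-1)$-indexed matrix $(y_{i_{1}i_{2}\ldots i_{m}})_{i_{2},\ldots,i_{m}=1}^{n}$ in $Y$; raising to the $q$-th power and summing over $i_{1}$ yields
\begin{equation*}
\sum_{i_{1},\ldots,i_{m}=1}^{n}\|y_{i_{1}\ldots i_{m}}\|^{q} \leq c_{q}(Y)^{(m-1)q}\sum_{i_{1}=1}^{n}\int_{[0,1]^{m-1}}\left\|\sum_{i_{2},\ldots,i_{m}=1}^{n}y_{i_{1}i_{2}\ldots i_{m}}r_{i_{2}}(t_{2})\cdots r_{i_{m}}(t_{m})\right\|^{q}dt_{2}\cdots dt_{m}.
\end{equation*}
Then, for each fixed $(t_{2},\ldots,t_{m})\in[0,1]^{m-1}$, set $w_{i_{1}}:=\sum_{i_{2},\ldots,i_{m}=1}^{n}y_{i_{1}i_{2}\ldots i_{m}}r_{i_{2}}(t_{2})\cdots r_{i_{m}}(t_{m})\in Y$, apply the defining cotype $q$ inequality to the finite family $(w_{i_{1}})_{i_{1}=1}^{n}$, and integrate over $(t_{2},\ldots,t_{m})$. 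By Fubini, the double integration combines into an integral of $\|\sum y_{i_{1}\ldots i_{m}}r_{i_{1}}(t_{1})\cdots r_{i_{m}}(t_{m})\|^{q}$ over $[0,1]^{m}$, picking up an extra factor of $c_{q}(Y)^{q}$. Combining the two estimates and taking $q$-th roots closes the induction.

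There is no genuine obstacle; the one delicate point is bookkeeping with exponents. One must invoke the version of the cotype $q$ inequality with $s=q$ (constant $c_{q}(Y)$) rather than $s=2$ (constant $C_{q}(Y)$), since only then do the inner $q$-th powers coming from the induction hypothesis match the outer $q$-th powers needed to apply cotype to $(w_{i_{1}})_{i_{1}=1}^{n}$. Were we to work with $C_{q}(Y)$ instead, we would need to insert a Kahane inequality (Theorem \ref{6f}) at each step to reconcile the exponents $2$ and $q$, producing an additional factor of $K_{2,q}^{m}$; this explains why the statement is naturally phrased with $c_{q}(Y)^{m}$.
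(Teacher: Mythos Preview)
Your induction argument is correct and is the standard proof of this fact. Note, however, that the paper does not actually supply its own proof of this theorem: it merely states the result and cites \cite[Lemma 3.9]{perez}. Your argument is precisely the kind of proof one finds in such references, so there is nothing substantively different to compare.
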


By the multiple Kahane inequality it is plain that from the above inequality
we have%
\begin{eqnarray}
&&\left( \sum_{i_{1},\ldots ,i_{m}=1}^{n}\left\Vert y_{i_{1}\cdots
i_{m}}\right\Vert ^{q}\right) ^{1/q}  \label{8u} \\
&\leq &c_{q}(Y)^{m}K_{s,q}^{m}\left( \int_{[0,1]^{m}}\left\Vert
\sum_{i_{1},...,i_{m}=1}^{n}y_{i_{1}\ldots i_{m}}r_{i_{1}}(t_{1})\cdots
r_{i_{m}}(t_{m})\right\Vert ^{s}dt_{1}\cdots dt_{m}\right) ^{\frac{1}{s}}
\notag
\end{eqnarray}%
for all $s>0.$ Our next result shows how is the exact blow up rate of the
constant arising when we consider cotype $2$ spaces replacing the $\ell _{2}$
norm by a $\ell _{r}$ norm, $r<2,$ in the left hand side of the above
inequality.

\begin{theorem}
Let $Y\neq \{0\}$ be a cotype $2$ space and $p>0$. If $0<r\leq 2$ and $%
\left( y_{i_{1}\ldots i_{m}}\right) _{i_{1},\cdots ,i_{m}=1}^{n}$ is a
matrix in $Y$, then there is a constant $c_{m,p}>0$ such that
\begin{eqnarray*}
&&\left( \sum_{i_{1},\ldots ,i_{m}=1}^{n}\left\Vert y_{i_{1}\cdots
i_{m}}\right\Vert ^{r}\right) ^{1/r} \\
&\leq &c_{m,p}n^{m\left( \frac{1}{r}-\frac{1}{2}\right) }\left(
\int_{[0,1]^{m}}\left\Vert \sum_{i_{1},...,i_{m}=1}^{n}y_{i_{1}\ldots
i_{m}}r_{i_{1}}(t_{1})\cdots r_{i_{m}}(t_{m})\right\Vert ^{p}dt_{1}\cdots
dt_{m}\right) ^{\frac{1}{p}}
\end{eqnarray*}%
and the exponent $m\left( \frac{1}{r}-\frac{1}{2}\right) $ is optimal.
\end{theorem}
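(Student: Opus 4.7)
The plan is to prove the upper bound by combining H\"older's inequality with the multiple cotype~$2$ inequality (Theorem~\ref{prop3dan}) and the multiple Kahane inequality (Theorem~\ref{6f}), and to establish optimality by reducing to the scalar case already handled in Theorem~\ref{7657}. Since $Y\neq\{0\}$, a unit vector $y_{0}\in Y$ is available, and for any scalar array $(a_{I})$ the assignment $y_{I}:=a_{I}y_{0}$ transports scalar counterexamples into $Y$ with all norms factoring cleanly.

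For the upper bound I would first set $s>0$ by $\tfrac{1}{r}=\tfrac{1}{2}+\tfrac{1}{s}$, treating the boundary case $r=2$ separately (there the claim is just (\ref{8u})). H\"older's inequality on the $n^{m}$ discrete indices yields
\[
\Bigl( \sum_{i_{1},\ldots ,i_{m}=1}^{n}\Vert y_{i_{1}\cdots i_{m}}\Vert ^{r}\Bigr) ^{1/r}\leq n^{m/s}\Bigl( \sum_{i_{1},\ldots ,i_{m}=1}^{n}\Vert y_{i_{1}\cdots i_{m}}\Vert ^{2}\Bigr) ^{1/2},
\]
and since $\tfrac{1}{s}=\tfrac{1}{r}-\tfrac{1}{2}$ the desired power of $n$ appears automatically. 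It remains to dominate the $\ell_{2}$ sum of norms by an $L^{p}$-norm of the associated Rademacher sum, which is precisely Theorem~\ref{prop3dan} applied with $q=2$, followed by one application of the multiple Kahane inequality (Theorem~\ref{6f}) to pass from $L^{2}$ to $L^{p}$; the resulting constant is $c_{m,p}=c_{2}(Y)^{m}K_{p,2}^{m}$.

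For the optimality I would argue by reduction to the scalar case. Fix $y_{0}\in Y$ with $\Vert y_{0}\Vert =1$ and, given scalars $(a_{I})$, set $y_{I}:=a_{I}y_{0}$; both sides of the vector-valued inequality then scale by $\Vert y_{0}\Vert =1$, so a smaller exponent $t<m(\tfrac{1}{r}-\tfrac{1}{2})$ in the $Y$-valued statement would produce a scalar estimate
\[
\Bigl( \sum_{I}|a_{I}|^{r}\Bigr) ^{1/r}\leq c_{m,p}\,n^{t}\Bigl( \int_{[0,1]^{m}}\Bigl\vert \sum_{I}a_{I}r_{i_{1}}(t_{1})\cdots r_{i_{m}}(t_{m})\Bigr\vert ^{p}dt_{1}\cdots dt_{m}\Bigr) ^{1/p},
\]
contradicting the sharpness already established in Theorem~\ref{7657} via the Kahane--Salem--Zygmund multilinear forms $T_{m+1,n}$. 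The main mathematical content of the optimality is therefore located entirely in Theorem~\ref{7657}; here the only care needed is the trivial one-dimensional embedding and the separate treatment of the endpoint $r=2$, where the exponent collapses to zero and the statement degenerates to (\ref{8u}).
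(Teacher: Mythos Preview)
Your proposal is correct and follows essentially the same route as the paper: H\"older from $\ell_r$ to $\ell_2$, then the multiple cotype inequality (Theorem~\ref{prop3dan}) plus the multiple Kahane inequality (Theorem~\ref{6f}) for the upper bound, and optimality via the one-dimensional embedding $y_I=a_I y_0$ reducing to Theorem~\ref{7657}. The only cosmetic differences are that the paper routes through $L^1$ with constant $c_2(Y)^m K_{1,2}^m$ and invokes Theorem~\ref{6f} afterward to reach general $p$, and it re-runs the Kahane--Salem--Zygmund computation on $S_{m+1,n}:=T_{m+1,n}\cdot y$ rather than phrasing it as an abstract reduction; your packaging is slightly cleaner but the content is identical.
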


\begin{proof}
As in the proof of Theorem \ref{7657}, we have
\begin{eqnarray}
&&\left( \sum_{i_{1},\dots ,i_{m}=1}^{n}\left\Vert y_{i_{1},\dots
,i_{m}}\right\Vert ^{r}\right) ^{\frac{1}{r}}\   \label{8um} \\
&\leq &c_{2}(Y)^{m}K_{1,2}^{m}n^{m\left( \frac{1}{r}-\frac{1}{2}\right)
}\cdot \int_{\lbrack 0,1]^{m}}\left\Vert \sum_{i_{1},\dots
,i_{m}=1}^{n}r_{i_{1}}(t_{1})\cdots r_{i_{m}}(t_{m})y_{i_{1},\dots
,i_{m}}\right\Vert \,dt_{1}\cdots dt_{m}.  \notag
\end{eqnarray}

To prove the optimality of the above exponent $m\left( \frac{1}{r}-\frac{1}{2%
}\right) $, let us suppose that%
\begin{equation*}
\left( \sum_{i_{1},\dots ,i_{m}=1}^{n}\left\Vert y_{i_{1},\dots
,i_{m}}\right\Vert ^{r}\right) ^{\frac{1}{r}}\ \leq c_{m}n^{t}\cdot
\int_{\lbrack 0,1]^{m}}\left\Vert \sum_{i_{1},\dots
,i_{m}=1}^{n}r_{i_{1}}(t_{1})\cdots r_{i_{m}}(t_{m})y_{i_{1},\dots
,i_{m}}\right\Vert \,dt_{1}\cdots dt_{m}
\end{equation*}%
for a certain $c_{m}>0.$ Consider the $m+1$-linear form $T_{m+1,n}$ given by
the Kahane--Salem--Zygmund inequality and define%
\begin{equation*}
S_{m+1,n}(x_{1},...,x_{m+1})=T_{m+1,n}(x_{1},...,x_{m+1})y,
\end{equation*}%
for a certain fixed $y\in Y$ with $\left\Vert y\right\Vert =1.$ Then
\begin{eqnarray*}
&&\sum\limits_{i_{m+1}=1}^{n}\left(
\sum\limits_{i_{1},...,i_{m}=1}^{n}\left\Vert S_{m+1,n}\left(
e_{i_{1}},...,e_{i_{m+1}}\right) \right\Vert ^{r}\right) ^{\frac{1}{r}} \\
&\leq &\sum\limits_{i_{m+1}=1}^{n}c_{m}n^{t}\int_{[0,1]^{m}}\left\vert
\sum_{i_{1},\dots ,i_{m}=1}^{n}r_{i_{1}}(t_{1})\cdots
r_{i_{m}}(t_{m})T_{m+1,n}\left( e_{i_{1}},e_{i_{2}},...,e_{i_{m+1}}\right)
\right\vert \,dt_{1}\cdots dt_{m} \\
&\leq &c_{m}n^{t}K_{m+1}n^{\frac{m+2}{2}}.
\end{eqnarray*}%
Proceeding again as in the proof of Theorem \ref{7657} we conclude that%
\begin{equation*}
t\geq m\left( \frac{1}{r}-\frac{1}{2}\right) .
\end{equation*}%
By Theorem \ref{6f} we know that the same optimal estimate holds when
replacing the $L_{1}$-norm in (\ref{8um}) by any $L_{p}$-norm.
\end{proof}

\begin{remark}
A result similar to the one stated in Remark \ref{hh77} applies for this
case of cotype $2$ spaces.
\end{remark}

When $Y$ is a Hilbert space we can prove a result similar to Proposition \ref%
{7658}:

\begin{theorem}
Let $m,n$ be positive integers and $\left( y_{i_{1},\dots ,i_{m}}\right)
_{i_{1},...,i_{m}=1}^{n}$ be a sequence in a Hilbert space $Y.$ If $r\geq 2$%
, then%
\begin{equation*}
\left( \sum_{i_{1},\dots ,i_{m}=1}^{n}\left\Vert y_{i_{1},\dots
,i_{m}}\right\Vert ^{r}\right) ^{\frac{1}{r}}\,\leq 2^{\frac{m}{r}%
}\int_{[0,1]^{m}}\left\Vert \sum_{i_{1},\dots
,i_{m}=1}^{n}r_{i_{1}}(t_{1})\cdots r_{i_{m}}(t_{m})y_{i_{1},\dots
,i_{m}}\right\Vert \,dt_{1}\cdots dt_{m}
\end{equation*}%
and the constant $2^{\frac{m}{r}}$ is optimal.
\end{theorem}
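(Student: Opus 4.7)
The plan is to mimic the scalar proof of Proposition~\ref{7658}, with the only new ingredient being a Hilbert-valued analogue of the $p=1$ multiple Khinchin inequality.

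\textbf{Upper bound.} I would set $\theta := 2/r \in (0,1]$ and, exactly as in the proof of Proposition~\ref{7658}, apply Hölder's inequality to the scalar sequence $(\|y_{i_1,\dots,i_m}\|)$ to obtain
\begin{equation*}
\left(\sum_{i_1,\dots,i_m=1}^n \|y_{i_1,\dots,i_m}\|^r\right)^{1/r} \le \left(\sum \|y_{i_1,\dots,i_m}\|^2\right)^{\theta/2}\left(\max \|y_{i_1,\dots,i_m}\|\right)^{1-\theta}.
\end{equation*}
Lemma~\ref{778} bounds the maximum by $\int_{[0,1]^m}\|\sum r(t)y\|\,dt$. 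For the $\ell^2$-sum I would establish the Hilbert-valued Khinchin inequality
\begin{equation*}
\left(\sum \|y_{i_1,\dots,i_m}\|^2\right)^{1/2} \le 2^{m/2}\int_{[0,1]^m}\left\|\sum r(t)y\right\|\,dt
\end{equation*}
by fixing an orthonormal basis $(e_j)$ of $Y$, writing $y_{i_1,\dots,i_m} = \sum_j y_{i_1,\dots,i_m,j}\,e_j$, and for each $j$ applying the scalar multiple Khinchin inequality (\ref{multik}) with $p=1$ to the coefficients $(y_{i_1,\dots,i_m,j})$, which gives $(\sum |y_{i_1,\dots,i_m,j}|^2)^{1/2} \le 2^{m/2}\int|F_j(t)|\,dt$ for $F_j(t) := \sum r(t)y_{i_1,\dots,i_m,j}$. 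Squaring, summing in $j$, and using Minkowski's integral inequality
\begin{equation*}
\Big(\sum_j\|F_j\|_{L^1}^2\Big)^{1/2} \le \int\Big(\sum_j|F_j(t)|^2\Big)^{1/2}dt = \int\Big\|\sum r(t)y\Big\|\,dt
\end{equation*}
would complete this step, and substituting back into the Hölder split produces the advertised constant $2^{m\theta/2} = 2^{m/r}$.

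\textbf{Optimality.} I would fix any unit vector $y_0 \in Y$ (available since $Y \neq \{0\}$) and form the $(m+1)$-linear map $S_{m+1}(x_1,\dots,x_{m+1}) := R_{m+1}(x_1,\dots,x_{m+1})\,y_0$, where $R_{m+1}$ is the form from Section~2. Since $\|y_0\| = 1$, we have $\|S_{m+1}(e_{i_1},\dots,e_{i_{m+1}})\| = |R_{m+1}(e_{i_1},\dots,e_{i_{m+1}})|$ and $\|S_{m+1}\| = \|R_{m+1}\| = 2^m$. Applying the hypothesized inequality with optimal constant $C_r$ to the matrix $y_{i_1,\dots,i_m}^{(i_{m+1})} := S_{m+1}(e_{i_1},\dots,e_{i_{m+1}})$ (for each fixed $i_{m+1} \in \{1,\dots,2^m\}$), summing over $i_{m+1}$, interchanging sum and integral, and identifying $\sum_{i_{m+1}}|R_{m+1}(\cdot,\dots,\cdot,e_{i_{m+1}})|$ as the $\ell_1 = (c_0)^*$ norm of a linear functional bounded by $\|R_{m+1}\| = 2^m$, would give the upper estimate $\le 2^m\,C_r$. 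The structural count of $R_{m+1}$ from Section~2 gives the lower estimate $2^m\cdot(2^m)^{1/r}$, and comparing the two yields $C_r \ge 2^{m/r}$.

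\textbf{Main obstacle.} The delicate point is obtaining the sharp constant $2^{m/2}$ in the Hilbert-valued multiple Khinchin inequality. Passing through Theorem~\ref{prop3dan} together with the multiple Kahane inequality (Theorem~\ref{6f}) would only yield $c_2(Y)^m K_{1,2}^m = K_{1,2}^m$, which matches $2^{m/2}$ only if one invokes the (non-trivial) fact that the universal Kahane constant equals $\sqrt{2}$; the elementary basis-expansion plus Minkowski's integral inequality route sketched above bypasses this issue while still delivering the sharp constant, and the rest of the argument is a direct transplant of the scalar Proposition~\ref{7658}.
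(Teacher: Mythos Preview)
Your proof is correct and the optimality half is identical to the paper's argument (same form $S_{m+1}=R_{m+1}\cdot y_0$, same $\ell_1=(c_0)^*$ duality estimate, same monomial count). For the upper bound, however, you take a genuinely different route: the paper obtains the constant $2^{m/2}$ for the $\ell^2$-sum by invoking \eqref{8u} with $s=1$ and then quoting the sharp values $c_2(Y)=1$ (Hilbert space) and $K_{1,2}=\sqrt{2}$ (Lata\l a--Oleszkiewicz), exactly the path you flagged in your ``Main obstacle'' paragraph. Your alternative---expand in an orthonormal basis, apply the scalar multiple Khinchin inequality \eqref{multik} coordinatewise, and then use Minkowski's integral inequality to pass from $\bigl(\sum_j\|F_j\|_{L^1}^2\bigr)^{1/2}$ to $\int\|\sum r(t)y\|\,dt$---is more self-contained, since it avoids the nontrivial sharp Kahane constant and only uses the scalar Szarek constant $A_1=1/\sqrt{2}$ already present in the paper. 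The trade-off is that your argument is specific to Hilbert spaces (it needs Parseval), whereas the paper's route via cotype and Kahane is in principle more portable, at the cost of importing a deeper external result.
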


\begin{proof}
The proof is similar to the proof of Proposition \ref{7658}$.$ For $r\geq 2$
let us denote by $C_{r}$ the optimal constant satisfying%
\begin{equation*}
\left( \sum_{i_{1},\dots ,i_{m}=1}^{n}\left\Vert y_{i_{1},\dots
,i_{m}}\right\Vert ^{r}\right) ^{\frac{1}{r}}\,\leq
C_{r}\int_{[0,1]^{m}}\left\Vert \sum_{i_{1},\dots
,i_{m}=1}^{n}r_{i_{1}}(t_{1})\cdots r_{i_{m}}(t_{m})y_{i_{1},\dots
,i_{m}}\right\Vert \,dt_{1}\cdots dt_{m}
\end{equation*}%
for all sequence of scalars $\left( y_{i_{1},\dots ,i_{m}}\right)
_{i_{1},\dots ,i_{m}=1}^{n},$ for all $n.$ Let $\theta =\frac{2}{r};$ since $%
K_{1,2}=\sqrt{2}$ and $c_{2}(Y)=1$ (see \cite{latala} and \cite[Corollary
11.8]{diestel}), by the H\"{o}lder inequality, (\ref{8u}) and Lemma \ref{778}
we conclude that%
\begin{align*}
\left( \sum_{i_{1},\dots ,i_{m}=1}^{n}\left\Vert y_{i_{1},\dots
,i_{m}}\right\Vert ^{r}\right) ^{\frac{1}{r}}\,& \leq \left(
\sum_{i_{1},\dots ,i_{m}=1}^{n}\left\Vert y_{i_{1},\dots ,i_{m}}\right\Vert
^{2}\right) ^{\frac{\theta }{2}}\,\cdot \left( \max_{\substack{ %
i_{k}=1,\dots ,n  \\ k=1,\dots ,m}}\left\Vert y_{i_{1},\dots
,i_{m}}\right\Vert \right) ^{1-\theta } \\
& \leq \left( c_{2}(Y)^{m}K_{1,2}^{m}\right) ^{\theta
}\int_{[0,1]^{m}}\left\Vert \sum_{i_{1},\dots
,i_{m}=1}^{n}r_{i_{1}}(t_{1})\cdots r_{i_{m}}(t_{m})y_{i_{1},\dots
,i_{m}}\right\Vert \,dt_{1}\cdots dt_{m} \\
& =2^{\frac{m}{r}}\int_{[0,1]^{m}}\left\Vert \sum_{i_{1},\dots
,i_{m}=1}^{n}r_{i_{1}}(t_{1})\cdots r_{i_{m}}(t_{m})y_{i_{1},\dots
,i_{m}}\right\Vert \,dt_{1}\cdots dt_{m},
\end{align*}%
Now let us prove that the constant $2^{\frac{m}{r}}$ is sharp. Let $S_{m+1}$
be the $m+1$-linear form $R_{m+1}$ defined in Section 2, multiplied by a
fixed unit vector $y\in Y$. Using that $\left( c_{0}\right) ^{\ast }=\ell
_{1}$, we have%
\begin{align*}
& \sum\limits_{i_{m+1}=1}^{2^{m}}\left( \sum\limits_{i_{1},\dots
,i_{m}=1}^{2^{m}}\left\Vert S_{m+1}\left(
e_{i_{1}},e_{i_{2}},...,e_{i_{m+1}}\right) \right\Vert ^{r}\right) ^{\frac{1%
}{r}} \\
& \leq \sum\limits_{i_{m+1}=1}^{2^{m}}C_{r}\int_{[0,1]^{m}}\left\vert
\sum_{i_{1},\dots ,i_{m}=1}^{2^{m}}r_{i_{1}}(t_{1})\cdots
r_{i_{m}}(t_{m})R_{m+1}\left( e_{i_{1}},e_{i_{2}},...,e_{i_{m+1}}\right)
\right\vert \,dt_{1}\cdots dt_{m} \\
& =C_{r}\int_{[0,1]^{m}}\sum\limits_{i_{m+1}=1}^{2^{m}}\left\vert
R_{m+1}\left(
\sum_{i_{1}=1}^{2^{m}}r_{i_{1}}(t_{1})e_{i_{1}},...,%
\sum_{i_{m}=1}^{2^{m}}r_{i_{m}}(t_{m})e_{i_{m}},e_{i_{m+1}}\right)
\right\vert \,dt_{1}\cdots dt_{m} \\
& \leq C_{r}\sup_{t_{1},..,t_{m\in \lbrack
0,1]}}\sum\limits_{i_{m+1}=1}^{2^{m}}\left\vert R_{m+1}\left(
\sum_{i_{1}=1}^{2^{m}}r_{i_{1}}(t_{1})e_{i_{1}},...,%
\sum_{i_{m}=1}^{2^{m}}r_{i_{m}}(t_{m})e_{i_{m}},e_{i_{m+1}}\right)
\right\vert \\
& \leq 2^{m}C_{r}.
\end{align*}%
On the other hand, since $R_{m+1}$ has exactly $2^{m}$ monomials involving
the coordinates of the last variable and since $R_{m+1}$ has a total of $%
2^{2m}$ monomials, we conclude that%
\begin{equation*}
\sum\limits_{i_{m+1}=1}^{2^{m}}\left( \sum\limits_{i_{1},\dots
,i_{m}=1}^{2^{m}}\left\Vert R_{m+1}\left(
e_{i_{1}},e_{i_{2}},...,e_{i_{m+1}}\right) y\right\Vert ^{r}\right) ^{\frac{1%
}{r}}=2^{m}\cdot (2^{m})^{\frac{1}{r}}.
\end{equation*}%
and the proof is concluded as in Proposition \ref{7658}.
\end{proof}

\end{document}